\documentclass[12pt,nosumlimits,nonamelimits]{amsart}
\usepackage{hyperref,mathtools}

\topmargin 5mm
\evensidemargin 3mm
\oddsidemargin 3mm
\textwidth 158mm
\textheight 203mm

\newtheorem*{thm*}{Theorem}
\newtheorem*{cor*}{Corollary}
\newtheorem{lem}{Lemma}

\newcommand{\id}{\mathrm{id}}

\DeclareMathOperator{\lh}{span}
\DeclareMathOperator{\clh}{\overline{span}}

\title{Total sequences and the lower frame inequality}
\author{Narutaka Ozawa}
\address{RIMS, Kyoto University, \mbox{606-8502} Japan}
\email{narutaka@kurims.kyoto-u.ac.jp}
\thanks{The author was partially supported by JSPS KAKENHI Grant Numbers 24K00527, 25H00588, 25H00593}
\subjclass{Primary 46B15; Secondary 42C15}

\keywords{Hilbert space, total sequence, lower frame inequality}
\date{\today}

\begin{document}
\begin{abstract}
We prove that every total sequence in a Hilbert space 
satisfies the lower frame inequality after scaling. 
This solves A. Kulikov's ``exercise'' at mathoverflow. 
\end{abstract}
\maketitle
Let $H$ be a Hilbert space. 
We will view a vector $v$ in $H$ 
as a ``column vector'', i.e., an operator 
from $\mathbb{C}$ into $H$. 
Thus $v^*w$ is identified with 
the inner product of $v$ and $w$, 
and $vw^*$ is a rank $\le1$ operator on $H$ 
with $\|vw^*\|=\|v\|\|w\|$. 
The symbol $\lh$ stands for the linear span and $\clh$ 
for the closure of it. 
A sequence $(v_n)_{n=1}^\infty$ in 
$H$ is said to be \emph{total} if 
$\clh\{ v_n : n\geq1\}=H$. 

The following theorem, in particular, solves A. Kulikov's ``exercise'' 
at mathoverflow 
(\href{https://mathoverflow.net/questions/496725/}{496725}). 
The author is grateful to A. Kulikov for sharing this problem 
and pointing out an error in the author's original 
solution at mathoverflow. 

\begin{thm*}
Let $( v_n )_{n=1}^\infty$ be a total sequence in a Hilbert space $H$. 
Then, there are sequences $(w_n)_{n=1}^\infty$ and $(z_n)_{n=1}^\infty$ 
in $H$ such that $z_n\in\lh\{ v_k : k\geq n/2\}$ and 
$$\id_H=\sum_{n=1}^\infty w_nz_n^*$$
in the strong operator topology. 
In particular, 
there is a sequence $(\lambda_n)_{n=1}^\infty$ 
of positive numbers such that $(\lambda_n^{1/2}v_n)_{n=1}^\infty$
satisfies the lower frame inequality: 
\[
\| x\|^2 \le \sum_{n=1}^\infty\lambda_n |v_n^*x|^2 
\]
for every $x\in H$, with the right hand side being possibly $\infty$.
\end{thm*}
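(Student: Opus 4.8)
The plan is to realize $\id_H=\sum_nw_nz_n^*$ by constructing a \emph{Riesz basis} $(z_n)$ of $H$ made from the $v_k$ and with the prescribed support, taking $(w_n)$ to be its dual Riesz basis, and then deducing the lower frame inequality from a Cauchy--Schwarz estimate. Write $T_m:=\clh\{v_k:k\ge m\}$, so $H=T_1\supseteq T_2\supseteq\cdots$, and set $T_\infty:=\bigcap_{m\ge1}T_m$. We may assume $\dim H=\infty$; the finite-dimensional case is the same argument with a finite basis and the remaining $w_n,z_n$ taken to be $0$.

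The first step is an orthogonal decomposition of $H$ adapted to $(v_n)$. Since $\lh\{v_k:k\ge m\}$ is dense in $T_m$ and sits inside the closed subspace $\IC v_m+T_{m+1}\subseteq T_m$ (a closed subspace plus a one-dimensional subspace is closed), one gets $T_m=\IC v_m+T_{m+1}$, so $\ell_m:=T_m\ominus T_{m+1}$ has dimension $\le1$. Iterating $T_m=\ell_m\oplus T_{m+1}$ and passing to the limit yields $H=T_\infty\oplus\bigoplus_{m\ge1}\ell_m$, using that $\clh\{\ell_m:m\ge1\}=\clh\{T_m^\perp:m\ge1\}=T_\infty^\perp$. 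Pick a unit vector $f_m$ spanning $\ell_m$ whenever $\ell_m\neq\{0\}$ and an orthonormal basis $(g_i)$ of $T_\infty$; together these form an orthonormal basis of $H$ with $f_m\in T_m$ for every $m$ and $g_i\in T_m$ for every $m,i$.

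Next comes the scheduling. For each $K$ there are at most $\lfloor K/2\rfloor$ integers $m$ with $2m\le K$, so the orthonormal basis just built can be enumerated as $(e_n)_{n\ge1}$ with the property that $e_n=f_m$ only if $n\le2m$. For such $n$ we have $\lceil n/2\rceil\le m$, hence $e_n\in T_m\subseteq T_{\lceil n/2\rceil}$; and $e_n=g_i$ forces $e_n\in T_\infty\subseteq T_{\lceil n/2\rceil}$. So always $e_n\in\clh\{v_k:k\ge\lceil n/2\rceil\}$, and we may choose $z_n\in\lh\{v_k:k\ge\lceil n/2\rceil\}=\lh\{v_k:k\ge n/2\}$ with $\|z_n-e_n\|<2^{-n}$. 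Then $\bigl(\sum_n\|z_n-e_n\|^2\bigr)^{1/2}<1$, so the operator $T$ with $Te_n=z_n$ satisfies $\|T-\id_H\|<1$ and is invertible; hence $(z_n)=(Te_n)$ is a Riesz basis of $H$, its dual Riesz basis $w_n:=(T^*)^{-1}e_n$ obeys $z_m^*w_n=\delta_{mn}$, and $\id_H=\sum_nw_nz_n^*$ with the series converging (unconditionally, in particular) in the strong operator topology. This gives the first assertion.

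For the final statement, $(w_n)$ is a Riesz basis, hence Bessel: $\sum_n|w_n^*x|^2\le C\|x\|^2$ for some $C$ and all $x$. From $x=\sum_n(z_n^*x)w_n$ one gets $\|x\|^2=x^*x=\sum_n(z_n^*x)\overline{w_n^*x}$, whence $\|x\|^2\le\bigl(\sum_n|z_n^*x|^2\bigr)^{1/2}\bigl(\sum_n|w_n^*x|^2\bigr)^{1/2}\le C^{1/2}\|x\|\bigl(\sum_n|z_n^*x|^2\bigr)^{1/2}$, i.e.\ $\|x\|^2\le C\sum_n|z_n^*x|^2$. Writing $z_n=\sum_kc_{n,k}v_k$ (a finite sum with $c_{n,k}=0$ unless $k\ge n/2$, so $c_{n,k}\neq0$ implies $n\le2k$), a weighted Cauchy--Schwarz gives $|z_n^*x|^2\le\bigl(\sum_k|c_{n,k}|\bigr)\sum_k|c_{n,k}|\,|v_k^*x|^2$; summing in $n$ and interchanging the nonnegative sums yields $\sum_n|z_n^*x|^2\le\sum_k\mu_k|v_k^*x|^2$, where $\mu_k:=\sum_{n\le2k}\bigl(\sum_j|c_{n,j}|\bigr)|c_{n,k}|<\infty$, so $\lambda_k:=C\mu_k+1$ works. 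I expect the crux to be the second and third steps: seeing that the tail spans $T_m$ lose at most one dimension at a time, that a direction lying only in $T_m\ominus T_{m+1}$ must therefore be accounted for by step $2m$, and that the factor $2$ in $k\ge n/2$ is precisely what makes the scheduling possible --- whereas the obvious Gram--Schmidt construction yields vectors supported on $\{v_1,\dots,v_n\}$, the wrong end of the sequence.
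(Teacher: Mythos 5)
Your proof is correct and follows essentially the same route as the paper: the same filtration $\clh\{v_k:k\ge m\}$ with one-dimensional increments, the same interleaved orthonormal basis scheduled so that the $n$-th vector lies in the span of $\{v_k:k\ge n/2\}$, the same small-perturbation/invertibility argument (your $T$ is the adjoint of the paper's $\id_H-T$, and your dual Riesz basis $(w_n)$ is exactly the paper's $w_n$), and a doubly-applied Cauchy--Schwarz exploiting that only $n\le 2k$ contributes to $\lambda_k$. The only differences are cosmetic choices of weights and the Riesz-basis phrasing.
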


\begin{proof}
Let $H_n\coloneqq\clh\{ v_k : k\geq n \}$ and $H_\infty\coloneqq\bigcap_n H_n$. 
Then $H=H_1\supset H_2\supset\cdots$ and 
$\dim(H_n\cap H_{n+1}^\perp)\le1$. 
For each $n$, if $H_n=H_{n+1}$, then set $x_n\coloneqq0$; 
else take a unit vector 
$x_n\in H_n\cap H_{n+1}^\perp$. 
Thus $\{ x_n : x_n\neq0\}$ is an orthonormal 
basis for $H_\infty^\perp$.
We also fix an orthonormal basis $\{ y_n\}$ for $H_\infty$, 
which is possibly empty, and 
set $u_{2n-1} \coloneqq x_n$ and $u_{2n}\coloneqq y_n$ 
or 0 if $y_n$ is not defined. 
Then $\{ u_n : u_n\neq0\}$ is an orthonormal basis 
for $H$ such that $u_n\in H_{\lceil n/2\rceil}$. 
We choose $z_n\in\lh\{ v_k : k\geq n/2\}$ such that 
$\| u_n - z_n \| < 3^{-n}$. 
Then $T\coloneqq\sum_n u_n (u_n-z_n)^*$ converges absolutely, 
$\|T\|<1$, and 
$$\id_H-T=\bigl(\sum_n u_nu_n^*\bigr)-T = \sum_n u_nz_n^*$$ 
in the strong operator topology. 
Thus $w_n\coloneqq(\id_H-T)^{-1}u_n$ meet our demand. 
This proves the first half of Theorem.

For each $n$, 
write $z_n = \sum_{k\geq n/2} \gamma^{(n)}_k v_k$ 
as a finite sum. 
Note that by the Cauchy--Schwarz inequality, 
$(\sum_{n=1}^\infty \beta_n)^2\le \sum_{n=1}^\infty 2^n\beta_n^2$ 
holds for every sequence $(\beta_n)_{n=1}^\infty$ 
of non-negative numbers, where 
both sides can be $\infty$. 
Thus for every $x\in H$ 
\[
\|x\|^2 = \|\sum_n w_n z_n^*x\|^2 
\le\sum_n 2^n\|w_n\|^2|z_n^*x|^2,
\]
while 
\[
|z_n^*x|^2=|\sum_{k\geq n/2}\gamma^{(n)}_k v_k^*x |^2
\le 2^{-(n/2)+1}\sum_{k\geq n/2} 2^k|\gamma^{(n)}_k|^2 |v_k^*x |^2;
\]
which amounts to 
\[
\|x\|^2 \le \sum_n\sum_{k\geq n/2}2^{(n/2)+k+1}\|w_n\|^2|\gamma^{(n)}_k|^2 |v_k^*x |^2
=\sum_{k=1}^\infty\lambda_k |v_k^*x |^2,
\]
where $\lambda_k\coloneqq\sum_{n=1}^{2k}2^{(n/2)+k+1}\|w_n\|^2|\gamma^{(n)}_k|^2$.
\end{proof}
\end{document}